\newtheorem{prop}{Proposition}
\DeclareMathAlphabet{\mathchorus}{OT1}{cmtt}{m}{n}  
\newcommand{\mc}[1]{\mathchorus{#1}} 
\newcommand{\bv}{\begin{pmatrix}}  
\newcommand{\ev}{\end{pmatrix}} 
\newcommand*{\defeq}{\stackrel{\text{def}}{=}} 
\newcommand*{\cat}{\mkern 2mu _{\smallsmile}} 
\begin{document}
\title{A remark on the sequence defined by the nonhomogeneous linear difference equation}
\author{Sergei Kazenas }
\begin{abstract}
In this short paper, a formula for the sequence defined by the nonhomogeneous linear difference equation with variable coefficients is presented. A connection with the homogeneous case is shown.
\end{abstract}

\email{kazenas@protonmail.com} \maketitle

Consider the sequence $\bv w_1, w_2, ... \ev$ recursively defined by the equation:
$$
w_n = c_n + \sum_{j=0}^{n-1}a_{n,j}w_j  \text{,}
\eqno{(1)}$$
where $w_0$ is an arbitrary number; $\bv c_1, c_2, ...\ev$ and $\bv a_{1,0},a_{2,0},a_{2,1}, ...\ev$ are arbitrary number sequences.
A similarly defined recursive sequence was studied by Mallik in \cite{Mal1998}, for example. 
Here, the reasoning that was introduced in \cite{Kaz2019} is  utilized.

In order to present the result, it is convenient to treat the coefficients $a_{n,j}$ as values of a function $a_{x,y}$ such that $a_{n,j} = a_{x,y} |_{x=n, y=j}$.

\begin{prop}
If $w_n$ is defined by \normalfont{(1)}, then
$$
\sum_{j=0}^n w_j = c_n + w_0 \Phi_n a_{x,y} + \sum_{\ell=1}^{n-1} c_\ell \Phi_{n-\ell} a_{x+\ell,y+\ell}
\text{,}
\eqno{(2)}$$
where $\Phi_n = \Phi_{n;x,y}$ are operators that act on space of functions of two variables and defined by the rule:
\begin{align}
\Phi_{n;x,y}f_{x,y}\thinspace &  \normalfont{\defeq} \medspace \sum_{j=1}^{2^n} \prod_{k=0}^{n-1}(1+[2^k,2^k]_j (f_{k+1,\log_2\lceil 1+(j-1)\bmod2^k \rceil}-1)) \notag \\
                      &= 1 + \sum_{m=1}^n \sum_{1\leqslant k_1 < \ldots < k_m \leqslant n} f_{k_1,0}\prod_{i=1}^m f_{k_i,k_{i-1}} \text{.} \notag
\end{align}
\end{prop}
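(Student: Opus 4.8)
\emph{Step 1 (path expansion of $w_n$).} The plan is to unroll $(1)$ into an explicit sum over strictly decreasing chains of indices, sum that over $n$, reverse every chain, and then read off $\Phi$ as the surviving chain count. Concretely, write $\gamma_0=w_0$ and $\gamma_j=c_j$ for $j\ge1$; I claim that for every $n\ge1$
$$w_n=\sum_{m=0}^{n}\ \sum_{n=k_0>k_1>\cdots>k_m\ge0}\gamma_{k_m}\prod_{i=1}^{m}a_{k_{i-1},k_i},$$
i.e.\ $w_n$ is the sum, over all strictly decreasing chains issuing from $n$, of the product of the coefficients along the steps of the chain, weighted by $\gamma$ at the foot. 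This is a one-line induction on $n$: writing $(1)$ as $w_n=c_n+a_{n,0}w_0+\sum_{j=1}^{n-1}a_{n,j}w_j$ and substituting the expansion of each $w_j$ with $j\ge1$, the summand $c_n$ is the empty chain $(m=0)$, the summand $a_{n,0}w_0$ is the chain $n\to0$, and each step $n\to j$ with $j\ge1$ prepends $n$ to every chain produced by $w_j$.

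\emph{Step 2 (summation and reversal).} Adding $w_0$ and the Step-1 identity for $p=1,\dots,n$ gives $\sum_{j=0}^{n}w_j=w_0+\sum_{p=1}^{n}\sum_{m\ge0}\sum_{p=k_0>\cdots>k_m\ge0}\gamma_{k_m}\prod_{i=1}^{m}a_{k_{i-1},k_i}$. I reverse each chain: $p=k_0>\cdots>k_m=\ell$ becomes the strictly increasing chain $\ell=j_0<\cdots<j_m=p$ carrying the same weight $\prod_{i=1}^{m}a_{j_i,j_{i-1}}$, and the outer index of the sum records only that $1\le j_m\le n$. Grouping the terms by the foot $\ell=j_0$ of the chain,
$$\sum_{j=0}^{n}w_j=w_0\Bigl(1+\sum_{m\ge1}\ \sum_{0=j_0<\cdots<j_m\le n}\prod_{i=1}^{m}a_{j_i,j_{i-1}}\Bigr)+\sum_{\ell=1}^{n}c_\ell\Bigl(\sum_{m\ge0}\ \sum_{\ell=j_0<\cdots<j_m\le n}\prod_{i=1}^{m}a_{j_i,j_{i-1}}\Bigr),$$
the stray $w_0$ supplying the leading $1$; note that for $\ell\ge1$ the empty chain $(m=0)$ is present and contributes $1$, whereas for $\ell=0$ it is absent (the single chain $(0)$ is excluded, its top being forced into $\{1,\dots,n\}$).

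\emph{Step 3 (recognising $\Phi$).} By the combinatorial (second) form of its definition, $\Phi_{n;x,y}f_{x,y}$ equals $1$ plus, for each strictly increasing chain $0<k_1<\cdots<k_m\le n$, the product $\prod_{i=1}^{m}f_{k_i,k_{i-1}}$ over the consecutive pairs $k_{i-1}<k_i$ of $0=k_0<k_1<\cdots<k_m$. Hence the bracket multiplying $w_0$ is $\Phi_n a_{x,y}$. For $\ell\ge1$ the substitution $j_i=k_i+\ell$ carries $\sum_{\ell=j_0<\cdots<j_m\le n}\prod a_{j_i,j_{i-1}}$ to $\sum_{0=k_0<\cdots<k_m\le n-\ell}\prod a_{k_i+\ell,\,k_{i-1}+\ell}$, which is $\Phi_{n-\ell}\,a_{x+\ell,y+\ell}$; in particular the $\ell=n$ summand is $c_n\,\Phi_0(\cdots)=c_n$, and pulling it out of the sum yields exactly $(2)$. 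Taking $c_\ell\equiv0$ collapses the identity to $\sum_{j=0}^{n}w_j=w_0\Phi_n a_{x,y}$, the homogeneous case, so $(2)$ exhibits the nonhomogeneous partial sum as the superposition of the homogeneous sum launched by $w_0$ and one ``time-shifted'' homogeneous sum launched by each source $c_\ell$ — this is the announced connection.

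\emph{Expected difficulty.} No step is deep; the care required is purely bookkeeping — the boundary conventions at $\ell=0$ (the extra $+1$) and at $\ell=n$ (why it collapses to $c_n$), and matching the shift $a_{x,y}\mapsto a_{x+\ell,y+\ell}$ to a translation of chains. Independently, reconciling the two displayed formulas for $\Phi_{n;x,y}$ is a self-contained check: expanding the product $\prod_{k=0}^{n-1}\bigl(1+[2^k,2^k]_j(f_{k+1,\,\log_2\lceil1+(j-1)\bmod2^k\rceil}-1)\bigr)$ and letting $j$ range over $1,\dots,2^n$, the string $([2^k,2^k]_j)_{k=0}^{n-1}$ runs through all subsets of $\{0,\dots,n-1\}$ exactly once, and for the subset encoding a chain $0<k_1<\cdots<k_m\le n$ the factors that survive multiply to the chain weight $f_{k_1,0}f_{k_2,k_1}\cdots f_{k_m,k_{m-1}}$, the second index there returning the predecessor of $k{+}1$ in that chain; this last point is the piece most sensitive to the notation of \cite{Kaz2019}.
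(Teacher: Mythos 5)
Your argument is correct, but it is a genuinely different route from the paper's. The paper follows \cite{Kaz2019}: it builds $2^n$-dimensional vectors $\mc{w}_n$ out of Hadamard products of inflated blocks $\mc{1}_{2^{n-k}}\times\mc{u}_k$, arranges that $|\mc{w}_n|_1=\sum_{j=0}^n w_j$, and reads off the first (binary-indexed) form of $\Phi_{n;x,y}$ coordinate by coordinate, with the passage to the combinatorial form left to ``some combinatorics.'' You instead unroll the recursion directly into a sum over strictly decreasing chains, sum over $n$, reverse the chains, and regroup by the foot; this is elementary, self-contained, and does not invoke the external lemma, but it proves (2) against the \emph{second} displayed form of $\Phi$ and relegates the equivalence of the two forms to the sketched check in your last paragraph (which is essentially where the paper's coordinate bookkeeping lives, so one of the two verifications cannot be avoided). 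Your Steps 1--3 are sound: the chain expansion of $w_n$ is a clean induction, the reversal preserves the weight $\prod a_{\text{larger},\text{smaller}}$, the boundary cases ($\ell=0$ needing the stray $w_0$ to supply the leading $1$, and $\ell=n$ collapsing to $c_n\Phi_0=c_n$) are handled correctly, and the shift $j_i=k_i+\ell$ matches $\Phi_{n-\ell}a_{x+\ell,y+\ell}$. One caveat you pass over silently: as printed, the combinatorial form $f_{k_1,0}\prod_{i=1}^m f_{k_i,k_{i-1}}$ duplicates the factor $f_{k_1,0}$ if one sets $k_0=0$; your reading of the summand as the chain weight $f_{k_1,0}f_{k_2,k_1}\cdots f_{k_m,k_{m-1}}$ is the one consistent with the worked example for $w_4$, and it is the interpretation under which your proof (and the proposition) is correct.
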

\begin{proof}
Using the reasoning and the lemma from \cite{Kaz2019}, we construct vectors $\mc{w}_n $  such that $|\mc{w}_n|_1 = \sum_{j=0}^n w_j$:
\begin{align}
\mc{w}_n &= \mc{u}_n (\mc{1}_2 \times \mc{w}_{n-1}) + \mc{c}_n \notag \\
         &=(\mc{1}_{2^{n-1}} \times \mc{w}_1) \prod_{k=2}^n(\mc{1}_{2^{n-k}} \times \mc{u}_k)+
\sum_{i=2}^n(\mc{1}_{2^{n-i}} \times \mc{c}_i)\prod_{k=i+1}^n(\mc{1}_{2^{n-k}} \times \mc{u}_k) \text{,} \notag
\end{align}
where 
$\mc{w}_1   = \bv w_0, c_1 + a_{1,0}\ev $,
$\mc{c}_i   = c_i (\mc{0}_{2^n-1} \cat \mc{1}_1)$,
$\mc{u}_k   = \mc{1}_{2^{k-1}} \cat \bv a_{k,\lceil \log_2 i \rceil}\ev_{i=1}^{2{k-1}}$.

~\

Next, we should replace each of the expressions $\mc{1}_{2^{n-1}}$, $\mc{1}_{2^{n-k}}$, $\mc{1}_{2^{n-i}}$ by $\mc{1}_{\infty}$ 
and take the first $2^n$ elements of the resulting vector.
Finally, the transition to functions and the use of some combinatorics complete the proof.
\end{proof}

\emph{Remark}. It is easy to check that the values $\Phi_{n-\ell} a_{x+\ell,y+\ell}$ (2) can be treated as
the solutions of the homogeneous linear difference equation with the "shifted" coefficients.

\emph{Example}. Define $v_n$ by the equation: $v_0 = 1$, $v_n = \sum_{j=0}^{n-1}a_{n,j}v_j$ (for $n \geqslant 1$).

The first terms of the sequence are as follows:
\begin{align}
v_1 &= a_{1,0}, \notag \\
v_2 &= a_{2,0}+a_{1,0} a_{2,1} , \notag \\
v_3 &= a_{3,0}+a_{1,0} a_{3,1}+a_{2,0} a_{3,2}+a_{1,0} a_{2,1} a_{3,2}, \notag \\
v_4 &= a_{4,0}+a_{1,0} a_{4,1}+a_{2,0} a_{4,2}+a_{1,0} a_{2,1} a_{4,2}+a_{3,0} a_{4,3}+a_{1,0} a_{3,1} a_{4,3}+a_{2,0} a_{3,2} a_{4,3} \notag \\
    & + a_{1,0} a_{2,1} a_{3,2}a_{4,3}. \notag
\end{align}
Now by "shifting" coefficients we can write out:
\begin{align}
w_4 &= c_4 \notag \\
    &+ a_{4,3} c_3 \notag \\ 
    &+ (a_{4,2}+a_{3,2} a_{4,3}) c_2 \notag \\
    &+ (a_{4,1}+a_{2,1} a_{4,2}+a_{3,1} a_{4,3}+a_{2,1} a_{3,2} a_{4,3})c_1 \notag \\
    &+ (a_{4,0}+a_{1,0} a_{4,1}+a_{2,0} a_{4,2}+a_{1,0} a_{2,1} a_{4,2}+a_{3,0} a_{4,3}+a_{1,0} a_{3,1} a_{4,3}+a_{2,0} a_{3,2} a_{4,3} \notag \\
    &+ a_{1,0} a_{2,1} a_{3,2}a_{4,3})w_0. \notag
\end{align}

\vspace{5mm}

\vspace{5mm}
\end{document}